\theoremstyle{plain}
\newtheorem{theorem}{Theorem}
\newtheorem{corollary}[theorem]{Corollary}
\newtheorem{lemma}[theorem]{Lemma}
\newtheorem{proposition}[theorem]{Proposition}
\newtheorem{definition}[theorem]{Definition}
\theoremstyle{remark}
\newtheorem{remark}{Remark}
\newtheorem{example}{Example}
\begin{document}

\date{}

\title{The $OP$ subgroup of a torsion-free LCA group}

\author{Aliakbar Alijani}

\address{Department of Mathematics,\\ Technical and Vocational University(TVU),\\ Tehran,\\ Iran}

\email{alijanialiakbar@gmail.com}

\thanks{}

\subjclass{22B05,20K35}

\begin{abstract}
Let $G$ be a locally compact abelian (LCA) group. We denote by $G_{op}$, the intersection of all open pure subgroups of $G$, which we call the $OP$ subgroup of $G$. In this paper, we prove that the $OP$ subgroup of a  torsion-free LCA group $G$ is a non-zero pure subgroup of $G$.
\end{abstract}

\maketitle


\newcommand\sfrac[2]{{#1/#2}}

\newcommand\cont{\operatorname{cont}}
\newcommand\diff{\operatorname{diff}}


\section{Introduction}
\newcommand{\stk}{\stackrel}
Let $\pounds$ denote the category of locally compact abelian (LCA) groups with continuous homomorphisms as morphisms. For a group $G\in \pounds$, the identity component subgroup, and the subgroup of all compact elements of $G$ are denoted by $G_{0}$ and $bG$, respectively. A subgroup $H$ of $G$ is called pure if $nH=H\cap nG$ for all positive integers $n$. Recall that $G_{0}$ and $bG$ are two important and well-known pure subgroups of $G$. In this paper, we produce a new and non-zero pure subgroup of a torsion-free group $G$. Let $G_{op}$ be the intersection of all open pure subgroups of $G\in\pounds$. Then $G_{op}$ is a closed subgroup which we call the $OP$ subgroup of $G$. In this paper, we focus on the OP subgroup of a torsion-free group. We show that if $G$ is a torsion-free group, then $G_{op}$ is a non-zero pure subgroup of $G$ (see Theorem \ref{13}).

The additive topological group of real numbers is denoted by $\Bbb R$ , $\Bbb Q$ is the group of rationales with the discrete topology, $\Bbb Z$ is the group of integers and $\Bbb Z(n)$ is the cyclic group of order $n$. For any group $G$ and $H$, ${\rm Hom}(G,H)$ is the group of all continuous homomorphisms from $G$ to $H$, endowed with the compact-open topology. The dual group of $G$ is $\hat{G}=Hom(G,\Bbb R/\Bbb Z)$. For more on locally compact abelian groups, see \cite{HR}.

\section{The $OP$ subgroup of a torsion-free LCA group}

In this section, we introduce the concept and study some properties of the $OP$ subgroup of a torsion-free LCA group.

\begin{definition}\cite{F}
A subgroup $H$ of a group $G$ is said to be a pure subgroup if $nH=H\bigcap nG$ for all positive integers $n$.
\end{definition}

Let $G\in \pounds$, and $G_{op}$ be the intersection of all open pure subgroups of $G$. Then $G_{op}$ is a closed subgroup of $G$.

\begin{definition}
The subgroup $G_{op}$ of a group $G\in \pounds$ is called the $OP$ subgroup of $G$.
\end{definition}

\begin{remark}\label{1}
Let $D$ be a discrete group. It is clear that $0$ is an open pure subgroup of $D$. So $D_{op}=0$. The converse is not true. See the Example \ref{2}.
\end{remark}

\begin{example}\label{2}
Let $I$ be an infinite set, and $G=\prod_{I} \Bbb Z(2)$. Set $H_{j}=\prod_{i\in I} X_{i}$ where $X_{j}=0$ and $X_{i}=\Bbb Z(2)$ for $i\neq j$. By Theorem 2 of \cite{AR}, $H_{j}$ is a pure subgroup of $G$ for all $j\in I$. On the other hand, $H_{j}$ is an open subgroup of $G$ for all $j\in I$. Hence $G_{op}\subseteq \bigcap_{j\in I} H_{j}=0$.
\end{example}

\begin{lemma}\label{3}
Let $G$ be a group in $\pounds$. Then $G_{0}\subseteq G_{op}$.
\end{lemma}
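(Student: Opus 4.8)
The plan is to show that every open pure subgroup of $G$ already contains $G_{0}$, so that the same holds for their intersection. The key topological fact I would invoke is the standard observation that an open subgroup of a topological group is automatically closed: if $H\leq G$ is open, then each coset $g+H$ is open, so $G\setminus H=\bigcup_{g\notin H}(g+H)$ is open and $H$ is closed.

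Concretely, I would let $H$ be an arbitrary open pure subgroup of $G$ and consider $H\cap G_{0}$, regarded as a subset of $G_{0}$ with the subspace topology. Since $H$ is open in $G$, the set $H\cap G_{0}$ is open in $G_{0}$; since $H$ is also closed in $G$ (by the remark above), $H\cap G_{0}$ is closed in $G_{0}$ as well. Moreover $H\cap G_{0}$ is nonempty, as it contains the identity $0$. Because $G_{0}$ is connected, its only nonempty clopen subset is $G_{0}$ itself, so $H\cap G_{0}=G_{0}$, that is, $G_{0}\subseteq H$.

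Finally, since this inclusion holds for every open pure subgroup $H$ of $G$, I would conclude $G_{0}\subseteq\bigcap\{H : H \text{ an open pure subgroup of } G\}=G_{op}$, which is the assertion. I do not expect any real obstacle here; the only point requiring care is the passage from ``open'' to ``closed,'' which is handled by the coset argument above, and the use of connectedness of $G_{0}$, which is immediate from its definition as the identity component.
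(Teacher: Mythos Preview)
Your argument is correct. The paper's proof is shorter because it invokes a known structural fact: by \cite[Theorem~7.8]{HR}, the identity component $G_{0}$ equals the intersection of \emph{all} open subgroups of $G$, and since the open pure subgroups form a subfamily of the open subgroups, one immediately gets $G_{0}\subseteq G_{op}$. Your route is more self-contained: rather than citing that $G_{0}$ is the intersection of all open subgroups, you establish directly that $G_{0}$ sits inside each open (pure) subgroup via the clopen argument, which is in effect a proof of the easy inclusion in Theorem~7.8. The trade-off is that the paper's version is a one-line citation, while yours avoids any external reference and makes the role of connectedness explicit; note also that the purity hypothesis on $H$ is never used in either argument, so what you have really shown is $G_{0}\subseteq H$ for every open subgroup $H$.
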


\begin{proof}
By Theorem 7.8 of \cite{HR}, $G_{0}$ is the intersection of all open subgroups of $G$. So $G_{0}\subseteq G_{op}$.
\end{proof}

\begin{corollary}\label{4}
Let $C$ be a connected group. Then $C=C_{op}$.
\end{corollary}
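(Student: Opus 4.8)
The plan is to obtain this as an immediate corollary of Lemma \ref{3}. Since $C$ is connected, its identity component coincides with the whole group, that is, $C_{0}=C$. Lemma \ref{3} then gives $C=C_{0}\subseteq C_{op}$, and the reverse inclusion $C_{op}\subseteq C$ is automatic because $C_{op}$ is by construction a subgroup of $C$. Combining the two inclusions yields $C=C_{op}$.

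Alternatively, one can argue directly without invoking Lemma \ref{3}: in any connected topological group the only open subgroup is the group itself, since an open subgroup is simultaneously closed and a connected space admits no proper nonempty clopen subset. Consequently $C$ is the unique open subgroup of $C$; it is trivially pure, as the equality $nC=C\cap nC$ holds for every positive integer $n$; and therefore the intersection over all open pure subgroups that defines $C_{op}$ reduces to $C$ itself.

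I do not anticipate any real difficulty here, since the statement is essentially a restatement of Lemma \ref{3} in the connected case. The only point worth keeping in mind is the (trivial) observation that the family of open pure subgroups of $C$ is nonempty, as it contains $C$, so the defining intersection is well behaved.
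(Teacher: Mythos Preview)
Your proposal is correct and follows the same approach as the paper: the paper's proof is simply ``It is clear by Lemma~\ref{3},'' and your first paragraph spells out exactly the intended reasoning (namely $C_0=C$ and $C_{op}\subseteq C$). The alternative direct argument you add is also valid and is just the content of Lemma~\ref{3} unpacked for the connected case.
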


\begin{proof}
It is clear by Lemma \ref{3}.
\end{proof}

\begin{definition}\cite{AR}
A group $G\in \pounds$ is said to be pure-simple if and only if $G$ contains no nontrivial closed pure subgroups.
\end{definition}

\begin{example}\label{5}
By \cite[Theorem~1]{AR}, $\widehat{\Bbb Q/\Bbb Z}$ is a pure-simple group. So $(\widehat{\Bbb Q/\Bbb Z})_{op}=\widehat{\Bbb Q/\Bbb Z}$. 
\end{example}

The Example \ref{5} shows that the converse of Corollary \ref{4} need not to be true.

\begin{remark}\label{6}
Let $H$ be an open pure subgroup of $G$. It is clear that every open pure subgroups of $H$ is open pure in $G$. So $G_{op}\subseteq H_{op}$.
\end{remark}

\begin{lemma}\label{7}
Let $G\in \pounds$ be a torsion-free group and $H$, an open pure subgroup of $G$. Then $G_{op}=H_{op}$.
\end{lemma}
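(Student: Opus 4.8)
The plan is to establish the two inclusions $G_{op}\subseteq H_{op}$ and $H_{op}\subseteq G_{op}$ separately. The first is immediate from what is already in place: since $H$ is an open pure subgroup of $G$, Remark \ref{6} gives $G_{op}\subseteq H_{op}$ at once, with no use of torsion-freeness.

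For the reverse inclusion $H_{op}\subseteq G_{op}$, I would argue that $H_{op}$ is contained in every open pure subgroup $K$ of $G$; since $G_{op}$ is by definition the intersection of all such $K$, this gives $H_{op}\subseteq G_{op}$. So fix an open pure subgroup $K$ of $G$. The key claim is that $K\cap H$ is an open pure subgroup of $H$. Granting this, $H_{op}\subseteq K\cap H\subseteq K$, and intersecting over all $K$ completes the proof.

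To verify the claim, openness is clear, since $K$ is open in $G$ and hence $K\cap H$ is open in $H$. For purity I would proceed in two steps. First, in a torsion-free group the intersection of two pure subgroups is pure: if $x\in (K\cap H)\cap nG$, then purity of $K$ gives $x=nk$ with $k\in K$, and purity of $H$ gives $x=nh$ with $h\in H$; from $n(k-h)=0$ and torsion-freeness we get $k=h\in K\cap H$, so $x\in n(K\cap H)$, as required. Thus $K\cap H$ is pure in $G$. Second, any subgroup $P$ of $G$ that is contained in $H$ and pure in $G$ is automatically pure in $H$, because $nP\subseteq P\cap nH\subseteq P\cap nG=nP$. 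Applying this with $P=K\cap H$ yields the claim.

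There is no real obstacle here; the argument is short. The only place where a hypothesis is genuinely needed is the torsion-freeness of $G$, used precisely to cancel $n$ in $n(k-h)=0$ and conclude $k=h$. Without torsion-freeness the intersection of pure subgroups can fail to be pure, and the argument would break at exactly that point.
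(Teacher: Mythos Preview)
Your proof is correct and follows the same route as the paper: use Remark~\ref{6} for $G_{op}\subseteq H_{op}$, and for the reverse inclusion show that for any open pure $K\leq G$ the intersection $K\cap H$ is an open pure subgroup of $H$, whence $H_{op}\subseteq K$. The paper dismisses the purity of $K\cap H$ in $H$ as ``an easy calculation''; your two-step argument (first pure in $G$ via torsion-freeness, then pure in $H$) spells out exactly that calculation and correctly identifies torsion-freeness as the sole place where the hypothesis is used.
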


\begin{proof}
By Remark \ref{6}, $G_{op}\subseteq H_{op}$. Now, let $x\in H_{op}$ and $X$ is an arbitrary open pure subgroup of $G$. An easy calculation shows that $X\cap H$ is an open pure subgroup of $H$. Hence $x\in X$. So $G_{op}\subseteq H_{op}$, and proof is complete.
\end{proof}





Let $G$ and $H$ be groups in $\pounds$, and $f:G\to H$ a morphism. Then it is not necessary that $f(G_{op})\subseteq H_{op}$. For example, consider a nonzero morphism $f:\widehat{\Bbb Q/\Bbb Z}\to \Bbb Q/\Bbb Z$.

\begin{lemma}\label{8}
Let $f:G\to H$ be a morphism such that $H$ is a torsion-free group. Then $f(G_{op})\subseteq H_{op}$.
\end{lemma}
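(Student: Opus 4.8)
The goal is to show that for a morphism $f:G\to H$ with $H$ torsion-free, $f(G_{op})\subseteq H_{op}$. The plan is to show that the preimage of any open pure subgroup of $H$ is an open pure subgroup of $G$, so that every $x\in G_{op}$ lands in every open pure subgroup of $H$. Concretely, let $K$ be an arbitrary open pure subgroup of $H$, and set $L=f^{-1}(K)$. Since $f$ is continuous and $K$ is open, $L$ is an open subgroup of $G$. It remains to check that $L$ is pure in $G$; once this is done, $G_{op}\subseteq L$ by definition of $G_{op}$, hence $f(G_{op})\subseteq f(L)\subseteq K$, and intersecting over all such $K$ gives $f(G_{op})\subseteq H_{op}$.

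So the crux is purity of $L=f^{-1}(K)$ in $G$. I would argue as follows: fix a positive integer $n$ and take $x\in L\cap nG$, say $x=ny$ with $y\in G$. Then $f(x)=nf(y)\in K\cap nH$, and since $K$ is pure in $H$, $f(x)=nk$ for some $k\in K$. Now $f(y)$ and $k$ both map to the same element under multiplication by $n$, so $n(f(y)-k)=0$; because $H$ is torsion-free, this forces $f(y)=k\in K$, i.e.\ $y\in f^{-1}(K)=L$. Hence $x=ny\in nL$, which shows $L\cap nG\subseteq nL$; the reverse inclusion is automatic. Thus $L$ is pure, as needed.

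The main obstacle is precisely the step where one must conclude $y\in L$ from $f(x)\in nK$: without torsion-freeness of $H$ one only gets $f(ny)=nk$, which does not pin down $f(y)$, and indeed the preimage of a pure subgroup need not be pure in general (this is consistent with the remark preceding the lemma, where $f:\widehat{\mathbb{Q}/\mathbb{Z}}\to\mathbb{Q}/\mathbb{Z}$ fails the conclusion). The torsion-free hypothesis on $H$ is exactly what removes this ambiguity by making multiplication by $n$ injective on $H$. Everything else — continuity giving openness of the preimage, and the final intersection argument — is routine.
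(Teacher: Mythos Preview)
Your proof is correct and follows essentially the same route as the paper: you show that the preimage under $f$ of any open pure subgroup of $H$ is an open pure subgroup of $G$, using torsion-freeness of $H$ to deduce $f(y)\in K$ from $nf(y)=nk$. The only difference is notational, and your explicit remark on why torsion-freeness is essential is a nice addition.
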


\begin{proof}
Let $y\in G_{op}$ and $X$, be an open pure subgroup of $H$. It is clear that $f^{-1}(X)$ is an open subgroup of $G$. Now we show that $f^{-1}(X)$ is pure in $G$. Let $n$ be a positive integer, and $g\in f^{-1}(X)\cap nG$. Then $f(g)\in X$, and $g=ng_{1}$ for some $g_{1}\in G$. So $nf(g_{1})\in X$. Since $X$ is pure in $H$, $nf(g_{1})=nx_{1}$ for some $x_{1}\in X$. Since $H$ is torsion-free, it follows that $f(g_{1})\in X$. Hence $g\in nf^{-1}(X)$, and $f^{-1}(X)$ is pure in $G$. So $f(y)\in X$, and proof is complete.
\end{proof}

Let $G\in \pounds$. We denote by $G^{*}$, the minimal divisible extension of $G$. By \cite[~4.18.h]{HR}, $G^{*}$ is an LCA group containing $G$ as an open subgroup.

\begin{corollary}\label{9}
Let $G$ be a torsion-free group in $\pounds$. Then $G_{op}\subseteq G^{*}_{op}$.
\end{corollary}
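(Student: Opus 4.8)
The plan is to deduce this at once from Lemma~\ref{8} applied to the natural inclusion $G\hookrightarrow G^{*}$. The first thing I would check is that the minimal divisible extension $G^{*}$ of a torsion-free group $G$ is again torsion-free: algebraically $G^{*}$ is the divisible hull of $G$, and the divisible hull of a torsion-free abelian group is torsion-free, a property not affected by the topology carried by $G^{*}$. This is the one point in the argument that is not a formal consequence of the earlier lemmas, so it is where I would be most careful (if need be, invoking the structure of $G^{*}$ from \cite{HR} to justify torsion-freeness).

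Next I would note that, since by \cite[4.18.h]{HR} the group $G$ is an open subgroup of $G^{*}$, the inclusion map $\iota\colon G\to G^{*}$ is a morphism in $\pounds$ (in fact a topological embedding onto an open subgroup). Applying Lemma~\ref{8} with $f=\iota$ and $H=G^{*}$, which we have just argued is torsion-free, yields $\iota(G_{op})\subseteq (G^{*})_{op}$. Since $\iota$ is the inclusion, $\iota(G_{op})=G_{op}$, and hence $G_{op}\subseteq G^{*}_{op}$, as required.

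I do not expect any real obstacle here beyond the torsion-freeness of $G^{*}$. It is worth remarking that one cannot instead route the proof through Lemma~\ref{7}, since $G$ need not be a \emph{pure} subgroup of $G^{*}$ — for example $\mathbb{Z}$ is open but not pure in $\mathbb{Q}=\mathbb{Z}^{*}$ — so the preimage-purity device of Lemma~\ref{8} is precisely the tool that makes the inclusion usable.
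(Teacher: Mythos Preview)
Your proof is correct and follows the same approach as the paper: apply Lemma~\ref{8} to the inclusion $G\hookrightarrow G^{*}$. You are in fact more careful than the paper in explicitly verifying that $G^{*}$ is torsion-free, a hypothesis of Lemma~\ref{8} that the paper leaves implicit.
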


\begin{proof}
It is sufficient to consider the inclusion map $i:G\hookrightarrow G^{*}$ and Lemma \ref{8}.
\end{proof}

\begin{corollary}\label{10}
Let $G$ be a torsion-free group in $\pounds$ such that $G_{op}=G$. Then $G^{*}_{op}=G^{*}$.
\end{corollary}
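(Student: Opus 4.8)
The plan is to prove the apparently stronger fact that $G^{*}$ has \emph{no} proper open pure subgroup; since $(G^{*})_{op}$ is the intersection of all open pure subgroups of $G^{*}$ (and $G^{*}$ is itself one of them, so this family is nonempty), that immediately yields $(G^{*})_{op}=G^{*}$. So I would fix an arbitrary open pure subgroup $X$ of $G^{*}$ and aim to show $X=G^{*}$ by establishing two things: that $G\subseteq X$, and that $X$ is divisible.

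For the inclusion $G\subseteq X$, I would invoke Corollary \ref{9}: since $G$ is torsion-free, $G=G_{op}\subseteq (G^{*})_{op}\subseteq X$. (If one prefers a self-contained argument, the same torsion-free cancellation used in the proof of Lemma \ref{8} shows directly that $X\cap G$ is an open pure subgroup of $G$, whence $G=G_{op}\subseteq X\cap G\subseteq X$.) For divisibility of $X$, I would use that $G^{*}$, being the minimal divisible extension of the torsion-free group $G$, is itself divisible and torsion-free; in particular $nG^{*}=G^{*}$ for every positive integer $n$, so purity of $X$ gives $nX=X\cap nG^{*}=X\cap G^{*}=X$. Thus $X$ is a divisible subgroup of $G^{*}$ containing $G$.

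Finally I would use minimality of $G^{*}$ to conclude $X=G^{*}$. Concretely, $G^{*}/G$ is a torsion group, so any $z\in G^{*}$ satisfies $mz\in G\subseteq X$ for some positive integer $m$; using divisibility of $X$, pick $y\in X$ with $my=mz$, and then $z=y\in X$ because $G^{*}$ is torsion-free. Hence $X=G^{*}$, as desired. I do not anticipate a genuine obstacle here: the only point worth care is that the step ``$X$ divisible $\Rightarrow$ $X=G^{*}$'' really consumes the hypothesis that $G^{*}$ is the \emph{minimal} divisible extension of $G$ (equivalently, that $G^{*}/G$ is torsion), together with torsion-freeness of $G^{*}$, which also underlies the purity computation above; with ``some divisible extension'' in place of ``minimal'' the argument would break down.
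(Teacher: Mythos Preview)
Your proof is correct and follows essentially the same route as the paper's: invoke Corollary~\ref{9} to get $G\subseteq (G^{*})_{op}$, observe divisibility via purity inside the divisible group $G^{*}$, and then conclude by minimality of $G^{*}$ (together with torsion-freeness). The only cosmetic difference is that the paper applies these three steps directly to $(G^{*})_{op}$ rather than to an arbitrary open pure subgroup $X$, so your argument actually establishes the slightly stronger statement that $G^{*}$ has no proper open pure subgroup.
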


\begin{proof}
By Corollary \ref{9}, $G\subseteq G^{*}_{op}$. Since $G^{*}$ is torsion-free, an easy calculation shows that $G^{*}_{op}$ is divisible. It follows from the minimality $G^{*}$ that $G^{*}_{op}=G^{*}$.
\end{proof}

A morphism is called proper if it is open onto its image, and a short exact sequence $0\to A\stackrel{\phi}{\to} B\stackrel{\psi}{\to}C\to 0$ in $\pounds$ is said to be an extension of $A$ by $C$ if $\phi$ and $\psi$ are proper morphisms. Following \cite{FG1}, we let $Ext(C,A)$ denote the (discrete) group extensions of $A$ by $C$.
Our next goal is to prove that the $OP$ subgroup of a torsion-free LCA group is nonzero. To do this, we need the following results about $Ext$.

\begin{lemma}(\cite[page 222(B)]{F})\label{14}
Let $D$ be a discrete divisible group. Then $Ext(C,D)=0$ for all discrete groups $C$.
\end{lemma}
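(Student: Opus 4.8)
The plan is to recognize this lemma as a restatement of the fact that a divisible abelian group is injective as a $\Bbb Z$-module, for which the vanishing of $Ext$ against every argument is the standard homological reformulation. First I would recall that, for a fixed discrete group $D$, the condition $Ext(C,D)=0$ for all discrete $C$ is equivalent to the statement that every short exact sequence $0\to D\stackrel{\phi}{\to} B\stackrel{\psi}{\to} C\to 0$ of abelian groups splits; so it is enough to show that a divisible $D$ is injective.

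To establish injectivity I would appeal to Baer's criterion: a $\Bbb Z$-module $D$ is injective provided every homomorphism from an ideal of $\Bbb Z$ into $D$ extends to all of $\Bbb Z$. The ideals of $\Bbb Z$ are the subgroups $n\Bbb Z$, $n\ge 0$. Given $\alpha:n\Bbb Z\to D$, put $d=\alpha(n)$; since $D$ is divisible choose $d'\in D$ with $nd'=d$, and then $\beta:\Bbb Z\to D$ with $\beta(1)=d'$ satisfies $\beta|_{n\Bbb Z}=\alpha$ (the case $n=0$ being trivial). Hence $D$ is injective.

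Finally I would deduce the splitting from injectivity in the usual way: given an extension $0\to D\stackrel{\phi}{\to} B\stackrel{\psi}{\to} C\to 0$, apply the extension property of $D$ to the identity map $\mathrm{id}_D:D\to D$ along the monomorphism $\phi$, obtaining $r:B\to D$ with $r\phi=\mathrm{id}_D$; such a retraction splits the sequence, so its class in $Ext(C,D)$ is trivial. As $C$ is arbitrary, $Ext(C,D)=0$.

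The only non-formal ingredient is Baer's criterion itself, whose proof is a Zorn's-lemma argument: one takes a maximal partial extension of the given map to a subgroup of the ambient module, and uses divisibility exactly as above to enlarge its domain past any element not yet in it, contradicting maximality. Since the lemma is quoted from \cite{F} one may simply invoke it, but if a self-contained treatment is wanted, that Zorn step is where the actual work lies.
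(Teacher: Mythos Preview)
Your argument is correct; the equivalence of divisibility with injectivity over $\Bbb Z$ via Baer's criterion, followed by the splitting of every extension, is the standard route to this result. The paper itself offers no proof of this lemma but simply cites it from \cite{F}, so there is nothing further to compare; what you have written is essentially the argument one finds in the cited reference.
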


\begin{theorem}(\cite[Theorem~2.12]{FG1})\label{15}
Let $A$ and $C$ be groups in $\pounds$. Then $Ext(C,A)\cong Ext(\hat{A},\hat{C})$.
\end{theorem}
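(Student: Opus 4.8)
The plan is to realize the isomorphism through Pontryagin duality, viewed as a contravariant functor on $\pounds$. First I would recall that $D\colon \pounds\to\pounds$, $D(G)=\hat G={\rm Hom}(G,\Bbb R/\Bbb Z)$, is a contravariant additive functor, and that the Pontryagin duality theorem supplies a natural isomorphism $D\circ D\cong \mathrm{id}_{\pounds}$ via the canonical maps $G\to\widehat{\widehat{G}}$. Thus $D$ is an anti-equivalence of $\pounds$ with itself, and the strategy is to transport extensions across $D$ and use reflexivity to invert.

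The crucial technical step is to check that $D$ is exact on proper short exact sequences: given an extension $0\to A\stackrel{\phi}{\to}B\stackrel{\psi}{\to}C\to 0$ with $\phi,\psi$ proper, the dual sequence $0\to\hat C\stackrel{\hat\psi}{\to}\hat B\stackrel{\hat\phi}{\to}\hat A\to 0$ is again exact with proper maps. Here I would use the standard correspondence between closed subgroups and proper quotients under duality: realizing $A$ via $\phi$ as a closed subgroup of $B$, the map $\hat\phi\colon\hat B\to\hat A$ is the proper quotient onto $\hat A\cong\hat B/A^{\perp}$, while $\hat\psi$ identifies $\hat C$ with the annihilator $A^{\perp}\cong\widehat{B/A}$ as a closed subgroup of $\hat B$. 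Because duality interchanges proper injections with proper surjections and closed subgroups with proper quotients, both $\hat\psi$ and $\hat\phi$ are proper and the dual sequence is exact. This is the step I expect to be the main obstacle, since $\pounds$ is not an abelian category: algebraic exactness is not enough, and one must carefully track the topological properness (openness onto the image) and the local compactness of the dual middle term, rather than relying on a purely homological argument.

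With exactness established, I would define $\Phi\colon Ext(C,A)\to Ext(\hat A,\hat C)$ by sending the class of $0\to A\to B\to C\to 0$ to the class of its dual $0\to\hat C\to\hat B\to\hat A\to 0$, which is an extension of $\hat C$ by $\hat A$ and so represents an element of $Ext(\hat A,\hat C)$. Well-definedness on equivalence classes is immediate, since an equivalence of extensions is an isomorphism of the middle terms compatible with the end maps, and $D$ carries such an isomorphism to an isomorphism of the dual extensions. To see that $\Phi$ is a homomorphism for the Baer sum, I would recall that the Baer sum is assembled from a pullback along the diagonal $C\to C\oplus C$ and a pushout along the codiagonal $A\oplus A\to A$; since the exact contravariant functor $D$ converts pullback squares into pushout squares and conversely, it carries the Baer-sum construction on $Ext(C,A)$ to the corresponding construction on $Ext(\hat A,\hat C)$, so $\Phi$ respects the group operation.

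Finally, to prove that $\Phi$ is an isomorphism I would apply the same construction to $Ext(\hat A,\hat C)$, obtaining a homomorphism $Ext(\hat A,\hat C)\to Ext(\widehat{\widehat{C}},\widehat{\widehat{A}})$; composing with the natural identifications $\widehat{\widehat{A}}\cong A$ and $\widehat{\widehat{C}}\cong C$ from the duality theorem yields a two-sided inverse of $\Phi$, up to these canonical natural isomorphisms. Hence $\Phi$ is a bijective homomorphism, giving $Ext(C,A)\cong Ext(\hat A,\hat C)$ as required.
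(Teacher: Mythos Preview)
The paper does not supply a proof of this theorem at all: it is simply quoted as \cite[Theorem~2.12]{FG1}, so there is no argument in the paper to compare against. Your proposal is the standard proof via Pontryagin duality---dualize a proper extension using the annihilator correspondence to see that $D$ is exact on proper short exact sequences, check compatibility with equivalence and with the Baer sum, and invert using reflexivity---and it is correct; this is essentially the argument Fulp and Griffith give in the cited reference.
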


\begin{proposition}(\cite[Proposition~2.17(f)]{FG1})\label{16}
Let $D$ be a discrete group. Then ${\rm Ext}(\hat{\Bbb Z},D)\cong D$.
\end{proposition}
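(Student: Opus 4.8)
The plan is to realize $\hat{\Bbb Z}$ concretely and then feed a short ``free'' presentation of it into the six-term exact sequence relating ${\rm Hom}$ and ${\rm Ext}$. First I would observe that evaluation at the generator $1\in\Bbb Z$ gives a topological isomorphism $\hat{\Bbb Z}={\rm Hom}(\Bbb Z,\Bbb R/\Bbb Z)\cong\Bbb R/\Bbb Z$, so the assertion becomes ${\rm Ext}(\Bbb R/\Bbb Z,D)\cong D$. The basic input is the proper short exact sequence
\[
0\to\Bbb Z\to\Bbb R\to\Bbb R/\Bbb Z\to 0,
\]
which exhibits $\Bbb R/\Bbb Z$ as the quotient of $\Bbb R$ by the discrete subgroup $\Bbb Z$.

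Next I would apply the contravariant functors ${\rm Hom}(-,D)$ and ${\rm Ext}(-,D)$ to this extension, producing the natural exact sequence
\[
0\to {\rm Hom}(\Bbb R/\Bbb Z,D)\to {\rm Hom}(\Bbb R,D)\to {\rm Hom}(\Bbb Z,D)\to {\rm Ext}(\Bbb R/\Bbb Z,D)\to {\rm Ext}(\Bbb R,D)\to {\rm Ext}(\Bbb Z,D)\to 0.
\]
The ${\rm Hom}$ terms are easy to evaluate. Since $D$ is discrete while $\Bbb R$ and $\Bbb R/\Bbb Z$ are connected, every continuous homomorphism out of $\Bbb R$ or $\Bbb R/\Bbb Z$ into $D$ has connected image, hence is trivial; thus ${\rm Hom}(\Bbb R,D)={\rm Hom}(\Bbb R/\Bbb Z,D)=0$. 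On the other hand, evaluation at $1$ identifies ${\rm Hom}(\Bbb Z,D)$ with $D$. Consequently the connecting homomorphism $D\cong{\rm Hom}(\Bbb Z,D)\hookrightarrow{\rm Ext}(\Bbb R/\Bbb Z,D)$ is injective, and its image is exactly the kernel of the map ${\rm Ext}(\Bbb R/\Bbb Z,D)\to{\rm Ext}(\Bbb R,D)$.

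To finish I would show that the two trailing ${\rm Ext}$ terms vanish. For ${\rm Ext}(\Bbb Z,D)$ this is immediate: $\Bbb Z$ is discrete and free, so every extension of $D$ by $\Bbb Z$ splits, whence ${\rm Ext}(\Bbb Z,D)=0$. The genuinely substantive point---and the step I expect to be the main obstacle---is the vanishing of ${\rm Ext}(\Bbb R,D)$, that is, the projectivity of $\Bbb R$ in $\pounds$. I would obtain this by dualizing: by Theorem \ref{15}, ${\rm Ext}(\Bbb R,D)\cong{\rm Ext}(\hat D,\hat{\Bbb R})={\rm Ext}(\hat D,\Bbb R)$, and $\Bbb R$ is injective in $\pounds$ (any extension of $\Bbb R$ by an LCA group splits, by divisibility of $\Bbb R$ together with the existence of a continuous cross-section onto the vector quotient), so this group is $0$. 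Granting these two vanishings, the six-term sequence collapses to $0\to D\to{\rm Ext}(\Bbb R/\Bbb Z,D)\to 0$, and the injective connecting map becomes an isomorphism, yielding ${\rm Ext}(\hat{\Bbb Z},D)\cong D$. I note that one could instead dualize at the very start, writing ${\rm Ext}(\hat{\Bbb Z},D)\cong{\rm Ext}(\hat D,\Bbb Z)$ and resolving $\Bbb Z$ inside $\Bbb R$ against the compact group $\hat D$; the computation is parallel and again reduces to the injectivity of $\Bbb R$, confirming that this structural fact is the real crux.
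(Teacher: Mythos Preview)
The paper does not supply a proof of this proposition at all: it is quoted verbatim from \cite[Proposition~2.17(f)]{FG1} and used as a black box in the proof of Lemma~\ref{12}. So there is nothing in the paper to compare your argument against.

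That said, your argument is sound and is in fact essentially the standard derivation. The identification $\hat{\Bbb Z}\cong\Bbb R/\Bbb Z$, the use of the proper extension $0\to\Bbb Z\to\Bbb R\to\Bbb R/\Bbb Z\to 0$, and the evaluation of the ${\rm Hom}$ terms are all routine and correct. The step ${\rm Ext}(\Bbb Z,D)=0$ is also fine: an extension in $\pounds$ of a discrete group by a discrete group is itself discrete (the kernel is open because the quotient is discrete, and a discrete open subgroup forces the whole group to be discrete), so one is back in the ordinary abelian-group setting where $\Bbb Z$ is projective. Your identification of ${\rm Ext}(\Bbb R,D)=0$ as the substantive input is exactly right; the injectivity of $\Bbb R$ in $\pounds$ (equivalently its projectivity, via Theorem~\ref{15} and self-duality of $\Bbb R$) is a genuine structural theorem, due in this form to Dixmier and Moskowitz and recorded in \cite{FG1} alongside the result you are proving. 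Your parenthetical justification of it is a sketch rather than a proof, but you correctly flag it as the one nontrivial ingredient, and once it is granted the six-term sequence collapses exactly as you describe.
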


The exact sequences (1) and (2) of the following Theorem establish a closed connection between $Hom$ and $Ext$ in $\pounds$.
\begin{theorem}(\cite[Corollary~2.10]{FG2})\label{11}
Let $G\in \pounds$ and $0 \to A \to B \to C \to 0$ be an extension in $\pounds$. Then, the following sequences are exact:
\begin{enumerate}
\item $0\to Hom(C,G)\to Hom(B,G)\to Hom(A,G)\to Ext(C,G)\to Ext(B,G)\to Ext(A,G)\to 0$
\item $0\to Hom(G,A)\to Hom(G,B)\to Hom(G,C)\to Ext(G,A)\to Ext(G,B)\to Ext(G,C)\to 0$
\end{enumerate}
\end{theorem}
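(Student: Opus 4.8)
The plan is to establish sequence (1) directly from the universal properties of the proper exact structure on $\pounds$, and then to obtain sequence (2) from it by Pontryagin duality, so that essentially all of the work is concentrated in the first sequence. Throughout I fix the extension $0\to A\stackrel{\phi}{\to}B\stackrel{\psi}{\to}C\to 0$, in which $\phi$ is a proper monomorphism and $\psi$ a proper (topological) quotient map.

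First I would treat the $Hom$-part of sequence (1). Applying the contravariant functor $Hom(-,G)$ and using that $\psi$ realizes $C$ as the topological quotient $B/\phi(A)$, one checks that $\psi^{*}$ is injective and that a morphism $B\to G$ factors continuously through $C$ precisely when it annihilates $\phi(A)$; this gives exactness of $0\to Hom(C,G)\to Hom(B,G)\to Hom(A,G)$. I would then define the connecting homomorphism $\delta\colon Hom(A,G)\to Ext(C,G)$ by pushout: for $f\colon A\to G$, form the pushout of $\phi$ along $f$, obtaining a proper extension $0\to G\to P\to C\to 0$ whose class is $\delta(f)$. This requires verifying that pushouts of proper monomorphisms along arbitrary morphisms exist in $\pounds$ and remain proper, a standard but necessary technical point.

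With the connecting map in hand, exactness at $Hom(A,G)$ amounts to $\delta(f)=0$ iff the pushout splits iff $f$ extends over $B$, while exactness at $Ext(C,G)$ and at $Ext(B,G)$ follows by the customary diagram chase, characterising images and kernels through the splitting of suitable pullbacks along $\psi$ and $\phi$ and using the Baer-sum structure on $Ext$. The main obstacle is the surjectivity of the last map $Ext(B,G)\to Ext(A,G)$, equivalently the termination of the sequence with $\to 0$: in a general abelian category the long sequence would continue with second-order Ext terms. Here one invokes the fact, special to $\pounds$, that the proper exact structure has homological dimension one, so that these higher terms vanish; I would reduce this to the vanishing of $Ext$ against the elementary building blocks of LCA groups (as in Lemma \ref{14} and Proposition \ref{16}) via the structure theorem $G\cong\Bbb R^{n}\times G_{1}$ with $G_{1}$ containing a compact open subgroup. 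This dimension-one input is the delicate heart of the argument.

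Finally I would deduce sequence (2) from sequence (1) by duality. Pontryagin duality carries the extension $0\to A\to B\to C\to 0$ to $0\to\hat C\to\hat B\to\hat A\to 0$, and Theorem \ref{15} together with the natural isomorphism $Hom(G,X)\cong Hom(\hat X,\hat G)$ identifies sequence (2) for the data $(0\to A\to B\to C\to 0,\,G)$ with sequence (1) for the data $(0\to\hat C\to\hat B\to\hat A\to 0,\,\hat G)$. Exactness of the latter has already been proved, so sequence (2) is exact, completing the argument.
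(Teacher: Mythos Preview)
The paper does not prove this theorem at all: it is quoted as \cite[Corollary~2.10]{FG2} and used as a black box, with no argument supplied. Your sketch is a reasonable outline of the standard Fulp--Griffith approach (pushout description of the connecting map, diagram chases for exactness at the $Ext$ terms, the homological-dimension-one fact for the proper exact structure on $\pounds$ to justify the terminal $\to 0$, and Pontryagin duality together with Theorem~\ref{15} to deduce sequence~(2) from sequence~(1)), so there is nothing in the present paper to compare it against.
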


\begin{lemma}\label{12}
Let $G\in \pounds$ be a non discrete, totally disconnected, torsion free group. Then $Hom(\widehat{\Bbb Q/\Bbb Z},G)\neq 0$.
\end{lemma}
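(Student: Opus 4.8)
The plan is to produce a nonzero continuous homomorphism from $\widehat{\Bbb Q/\Bbb Z}$ into $G$ by a duality-plus-$Ext$ argument. First I would dualize: since $G$ is non-discrete, totally disconnected and torsion-free, its dual $\hat G$ is a compact-free (no compact subgroups) group that is non-compact (the dual of non-discrete is non-compact) and connected (the dual of totally disconnected is connected? — actually the dual of a totally disconnected group need not be connected, so I must be careful here). More usefully, $\widehat{\Bbb Q/\Bbb Z}$ is a compact, connected, torsion-free group, and by Theorem \ref{15} we have $Hom(\widehat{\Bbb Q/\Bbb Z},G)\supseteq$ a copy related to $Ext(\hat G,\widehat{\widehat{\Bbb Q/\Bbb Z}})=Ext(\hat G,\Bbb Q/\Bbb Z)$; but $Hom$ is not $Ext$, so the cleaner route is to realize a map directly.

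The key technical step I would carry out is the following. Because $G$ is non-discrete and totally disconnected, a standard structure fact (from \cite{HR}, e.g. $G$ has an open compact subgroup, or at least $G$ has arbitrarily small open subgroups) gives a proper descending chain of open subgroups, which after passing to a torsion-free quotient of one of these produces, inside $G$ (or inside $\hat G$), a copy of something like $\widehat{\Bbb Z}$ or an infinite product situation. More precisely, I would dualize the problem: $Hom(\widehat{\Bbb Q/\Bbb Z},G)\cong Hom(\hat G,\Bbb Q/\Bbb Z)$ by Pontryagin duality of $Hom$ groups (the functor $\widehat{(-)}$ induces $Hom(A,B)\cong Hom(\hat B,\hat A)$), and $Hom(\hat G,\Bbb Q/\Bbb Z)$ is the torsion part of the dual of $\hat G$, i.e. the torsion subgroup of $G$ — but $G$ is torsion-free, so that naive reading gives $0$, which tells me $Hom(\widehat{\Bbb Q/\Bbb Z},G)$ is NOT computed that way for non-discrete $\widehat{\Bbb Q/\Bbb Z}$. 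So instead I expect the real argument uses $Ext$: apply Theorem \ref{11}(2) to a well-chosen extension, or use Proposition \ref{16}, $Ext(\hat{\Bbb Z},D)\cong D$, after exhibiting inside $G$ an open subgroup whose dual contains $\hat{\Bbb Z}$ as a quotient.

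Concretely, here is the route I would push through. Since $G$ is non-discrete and totally disconnected, it contains a proper open subgroup $U$ with $G/U$ infinite discrete, hence $G/U$ contains a copy of $\Bbb Z$; thus there is a proper open subgroup $V$ (the preimage of $n\Bbb Z$, etc.) giving a quotient $G/V\cong$ an infinite cyclic or infinite torsion situation; more robustly, $G$ maps onto an infinite discrete torsion-free or torsion quotient $Q$. Dualizing, $\hat Q$ is a closed subgroup of $\hat G$, and $\hat Q$ is either $\widehat{\Bbb Z}$-like or contains $\widehat{\Bbb Z}$; using $Ext(\hat{\Bbb Z},\Bbb Q/\Bbb Z)\cong \Bbb Q/\Bbb Z\neq 0$ (Proposition \ref{16}) together with the six-term sequence of Theorem \ref{11}(1) applied to the extension $0\to \hat Q\to \hat G\to \widehat{(\ker)}\to 0$, I would deduce $Ext(\hat G,\Bbb Q/\Bbb Z)\neq 0$ or, dually via Theorem \ref{15}, $Ext(\widehat{\Bbb Q/\Bbb Z}? ,\dots)$. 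Then I convert a nonzero $Ext$ into a nonzero $Hom$ by the exactness in Theorem \ref{11}: in the six-term sequence for $0\to A\to B\to C\to 0$ with the right choices, a nonzero connecting map $Hom(A,G)\to Ext(C,G)$ or surjectivity onto $Ext$ forces the relevant $Hom$ to be nonzero.

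The hard part will be pinning down exactly which extension to feed into Theorem \ref{11} so that the connecting homomorphism is visibly nonzero, and making sure the subgroup of $G$ (or $\hat G$) that I extract really does carry a copy of $\hat{\Bbb Z}$ (equivalently, that $G$ has a quotient $\cong \Bbb Z$, or that $\hat G$ has a subgroup $\cong\widehat{\Bbb Z}$) — this is where non-discreteness, total disconnectedness, and torsion-freeness must all be used simultaneously, since dropping any one of them kills the conclusion (a discrete torsion-free group can have $Hom(\widehat{\Bbb Q/\Bbb Z},G)=0$, and a connected group is handled differently). I would organize the final proof as: (i) extract from $G$ an open subgroup $U$ and use $G/U$ infinite to get a map onto $\Bbb Z$ or a relevant torsion group; (ii) dualize to locate $\widehat{\Bbb Z}$; (iii) invoke Proposition \ref{16} and Lemma \ref{14} to see a nonzero $Ext$; (iv) run Theorem \ref{11} to transfer nonvanishing to the desired $Hom(\widehat{\Bbb Q/\Bbb Z},G)$ via Theorem \ref{15}.
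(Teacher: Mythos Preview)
Your toolbox is right --- Theorem \ref{11}, Lemma \ref{14}, Theorem \ref{15}, Proposition \ref{16} are exactly the inputs --- but your concrete plan (i)--(iv) has a genuine gap. In step (i) you want an open subgroup $U$ with $G/U$ infinite (to squeeze out a copy of $\Bbb Z$ as a quotient of $G$, equivalently a copy of $\widehat{\Bbb Z}$ inside $\hat G$). This fails already for the prototype $G=\widehat{\Bbb Q/\Bbb Z}\cong\prod_p\Bbb Z_p$: it is compact, so every open subgroup has \emph{finite} index, and there is no continuous surjection $G\to\Bbb Z$. More generally, any compact totally disconnected torsion-free $G$ (e.g.\ $\Bbb Z_p$) kills your step (i). So the direction ``look at the discrete quotient $G/U$'' cannot work in general; the information you need lives in the open subgroup itself, not in the quotient.

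The missing idea, which is what the paper does, is twofold. First, use the van Dantzig--type structure fact for totally disconnected LCA groups to choose a \emph{compact} open subgroup $K\subseteq G$; non-discreteness of $G$ is precisely what guarantees $K\neq 0$. Second, feed the extension
\[
0\longrightarrow \widehat{\Bbb Q/\Bbb Z}\longrightarrow \hat{\Bbb Q}\longrightarrow \hat{\Bbb Z}\longrightarrow 0
\]
into the \emph{first} variable of Theorem \ref{11}(1) with $K$ in the second variable. You get
\[
\cdots\to Hom(\widehat{\Bbb Q/\Bbb Z},K)\to Ext(\hat{\Bbb Z},K)\to Ext(\hat{\Bbb Q},K)\to\cdots .
\]
Now $Ext(\hat{\Bbb Q},K)\cong Ext(\hat K,\Bbb Q)=0$ by Theorem \ref{15} and Lemma \ref{14} (here $\hat K$ is discrete), while $Ext(\hat{\Bbb Z},K)\cong K\neq 0$ (Proposition \ref{16}/its dual form). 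Hence $Hom(\widehat{\Bbb Q/\Bbb Z},K)\neq 0$, and composing with the open inclusion $K\hookrightarrow G$ (or, equivalently, using the sequence $0\to K\to G\to G/K\to 0$ in the second variable) gives $Hom(\widehat{\Bbb Q/\Bbb Z},G)\neq 0$. Note how all three hypotheses enter exactly once: totally disconnected $\Rightarrow$ compact open $K$ exists; non-discrete $\Rightarrow K\neq 0$; torsion-free is what makes the ambient lemma meaningful (and is used downstream in Lemma \ref{8}).
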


\begin{proof}
By Theorem 24.30 of \cite{HR}, $G$ contains a compact open subgroup $K$. Consider the two exact sequences $0\to K\hookrightarrow G\to G/K\to 0$ and $0\to \widehat{\Bbb Q/\Bbb Z}\to \hat{\Bbb Q}\to \hat{\Bbb Z}\to 0$. By Theorem \ref{11}, we have the following exact sequences
\begin{equation}
0\to Hom(\widehat{\Bbb Q/\Bbb Z},K)\to Hom(\widehat{\Bbb Q/\Bbb Z},G)\to ...
\end{equation}
\begin{equation}
...\to Hom(\widehat{\Bbb Q/\Bbb Z},K)\to Ext(\hat{\Bbb Z},K)\to Ext(\hat{\Bbb Q},K)
\end{equation}
Let $Hom(\widehat{\Bbb Q/\Bbb Z},G)=0$. By (2.1), $Hom(\widehat{\Bbb Q/\Bbb Z},K)=0$. On the other hand, $Ext(\hat{\Bbb Q},K)\cong Ext(\hat{K},\Bbb Q)=0$ (see Lemma \ref{14} and Theorem \ref{15}). By (2.2) and Proposition \ref{16}, $K\cong Ext(\hat{\Bbb Z},K)=0$ which is a contradiction. Hence $Hom(\widehat{\Bbb Q/\Bbb Z},G)\neq0$, and proof is complete.
\end{proof}

\begin{theorem}\label{13}
Let $G\in \pounds$ be a non discrete, torsion free group. Then $G_{op}\neq 0$.
\end{theorem}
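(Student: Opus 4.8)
The plan is to reduce the general torsion-free case to the totally disconnected case already handled in Lemma \ref{12}, and then extract a nonzero element of $G_{op}$ from the nonzero homomorphism $\widehat{\Bbb Q/\Bbb Z}\to G$ produced there. Since $G$ is torsion-free and non-discrete, by the structure theorem $G\cong \Bbb R^n\times H$ where $H$ contains a compact open subgroup; and since $G$ is torsion-free, so is $H$. If $n>0$ then $G$ is non-totally-disconnected and in fact $G_0\neq 0$, so $G_{op}\neq 0$ already by Lemma \ref{3}; more generally, if $G_0\neq 0$ we are done. So the remaining case is $G$ totally disconnected, non-discrete, torsion-free, which is exactly the hypothesis of Lemma \ref{12}.

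**Next I would** use Lemma \ref{12} to fix a nonzero morphism $f:\widehat{\Bbb Q/\Bbb Z}\to G$. The key point is that $\widehat{\Bbb Q/\Bbb Z}$ is compact, connected and pure-simple (Example \ref{5}), so its only closed pure subgroups are $0$ and itself; hence $f$ being nonzero, $\ker f$ is a proper closed subgroup. I claim $\overline{f(\widehat{\Bbb Q/\Bbb Z})}\subseteq G_{op}$, equivalently $f(\widehat{\Bbb Q/\Bbb Z})\subseteq X$ for every open pure subgroup $X$ of $G$. Indeed, for such an $X$, the preimage $f^{-1}(X)$ is an open subgroup of $\widehat{\Bbb Q/\Bbb Z}$; but $\widehat{\Bbb Q/\Bbb Z}$ is connected, so its only open subgroup is the whole group, forcing $f(\widehat{\Bbb Q/\Bbb Z})\subseteq X$. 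Taking the intersection over all open pure $X$ gives $f(\widehat{\Bbb Q/\Bbb Z})\subseteq G_{op}$, and since $f\neq 0$ this shows $G_{op}\neq 0$.

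**The main obstacle** I anticipate is the reduction step: making sure the splitting $G\cong\Bbb R^n\times H$ genuinely lets me assume $G$ is totally disconnected without loss of generality, and confirming $H$ inherits torsion-freeness and non-discreteness where needed. The case $n=0$ but $G$ still having a nontrivial identity component is handled by Lemma \ref{3}; if $G_0=0$ then $G$ is totally disconnected, non-discrete (by hypothesis) and torsion-free, so Lemma \ref{12} applies directly and the connectedness argument above finishes the proof. The homomorphism-pullback argument is then essentially the same device used in Lemma \ref{8}, specialized to the fact that the connected group $\widehat{\Bbb Q/\Bbb Z}$ has no proper open subgroups, so no serious calculation is required there.
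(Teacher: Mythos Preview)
Your argument contains a genuine error: the group $\widehat{\Bbb Q/\Bbb Z}$ is \emph{not} connected. Since $\Bbb Q/\Bbb Z\cong\bigoplus_{p}\Bbb Z(p^{\infty})$ is a discrete torsion group, its Pontryagin dual $\widehat{\Bbb Q/\Bbb Z}\cong\prod_{p}\Bbb Z_{p}$ is profinite and hence totally disconnected. (Indeed, the paper places Example~\ref{5} immediately after Corollary~\ref{4} precisely to exhibit a \emph{non}-connected group $G$ with $G_{op}=G$.) Consequently your step ``$f^{-1}(X)$ is an open subgroup of $\widehat{\Bbb Q/\Bbb Z}$; but $\widehat{\Bbb Q/\Bbb Z}$ is connected, so its only open subgroup is the whole group'' fails outright: $\widehat{\Bbb Q/\Bbb Z}$ has plenty of proper open subgroups, e.g.\ $p\Bbb Z_{p}\times\prod_{q\neq p}\Bbb Z_{q}$.

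The repair is exactly what the paper does and what you gesture at in your last sentence without actually carrying out: one must use the \emph{purity} of $f^{-1}(X)$, not merely its openness. The proof of Lemma~\ref{8} shows that when the target $G$ is torsion-free and $X\subseteq G$ is open and pure, the preimage $f^{-1}(X)$ is an open \emph{pure} subgroup of the domain. Pure-simplicity (Example~\ref{5}) then forces $f^{-1}(X)\in\{0,\widehat{\Bbb Q/\Bbb Z}\}$, and since $\widehat{\Bbb Q/\Bbb Z}$ is non-discrete an open subgroup cannot be $\{0\}$, giving $f^{-1}(X)=\widehat{\Bbb Q/\Bbb Z}$; from there your conclusion follows. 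The paper packages this more directly as $f\bigl((\widehat{\Bbb Q/\Bbb Z})_{op}\bigr)\subseteq G_{op}$ (Lemma~\ref{8}) together with $(\widehat{\Bbb Q/\Bbb Z})_{op}=\widehat{\Bbb Q/\Bbb Z}$ (Example~\ref{5}). Your reduction via $G_{0}$ and Lemma~\ref{3} to the totally disconnected case is fine and matches the paper's contrapositive opening.
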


\begin{proof}
Assume to contrary, $G_{op}=0$. Then, by Lemma \ref{3}, $G$ is a totally disconnected group. By Lemma \ref{12}, there exists a nonzero morphism $f:\widehat{\Bbb Q/\Bbb Z}\to G$. By Lemma \ref{8}, $f((\widehat{\Bbb Q/\Bbb Z})_{op})\subseteq G_{op}=0$. On the other hand, by Example \ref{5}, $(\widehat{\Bbb Q/\Bbb Z})_{op}=\widehat{\Bbb Q/\Bbb Z}$. It follows that $f$ is a zero morphism which is a contradiction.
\end{proof}






\end{document}